\let\oldmarginpar\marginpar
\renewcommand\marginpar[1]{\-\oldmarginpar[\raggedleft\footnotesize #1]%
{\raggedright\footnotesize #1}}
\newtheorem{thm}{Theorem}[section]
\newtheorem{lem}[thm]{Lemma}
\newtheorem{prop}[thm]{Proposition}
\theoremstyle{definition}
\theoremstyle{remark}
\newtheorem{rmk}[thm]{Remark}
\newcommand{\Z}{\mathbb{Z}}
\newcommand{\R}{\mathbb{R}}
\newcommand{\Dn}{\Delta_n}
\newcommand{\Qn}{\square_n}
\newcommand{\Xn}{\diamondsuit_n}
\newcommand{\sF}{\mathscr{F}}
\newcommand{\sP}{\mathcal{P}}
\newcommand{\sFstd}{\mathscr{F}^\text{std}}
\DeclareMathOperator{\rank}{\mathrm{rank}}
\title[The absolute orders on the Coxeter groups $A_n$ and $B_n$ are Sperner]{The absolute orders on the Coxeter groups $A_n$ and $B_n$ are Sperner}
\author{Lawrence~H. Harper}
\address{Department of Mathematics\\
  University of California\\
  Riverside, CA 92521} 
\email{harper@math.ucr.edu}
\author{Gene~B. Kim}
\address{Department of Mathematics\\
  University of Southern California\\
  Los Angeles, CA 90089} 
\email{genebkim@math.usc.edu} 
\author{Neal Livesay}
\address{Department of Mathematics\\
  University of California\\
  Riverside, CA 92521} 
\email{neall@ucr.edu} 
\begin{document}
\begin{abstract}
Over 50 years ago, Rota posted the following celebrated ``Research Problem'': prove or disprove that the partial order of partitions on an $n$-set (i.e., the refinement order) is Sperner.  A counterexample was eventually discovered by Canfield in 1978.  However, Harper and Kim recently proved that a closely related order --- i.e., the refinement order on the symmetric group --- is not only Sperner, but strong Sperner.  Equivalently, the well-known absolute order on the symmetric group is strong Sperner.  In this paper, we extend these results by giving a concise, elegant proof that the absolute orders on the Coxeter groups $A_n$ and $B_n$ are strong Sperner.
\end{abstract}
\maketitle

\section{Introduction}
In 1928, Sperner \cite{Sp} proved that the poset of subsets of $[n]=\{1,2,\ldots,n\}$ has the property that none of its antichains (i.e., collections of pairwise incomparable vertices in the poset) has cardinality larger than the largest rank.  In 1967, Rota \cite{Ro} famously conjectured that the refinement order $\Pi_n$ (i.e., the poset of partitions of $[n]$) has this same property (which became known as the \emph{Sperner property}).  There were many attempts to prove Rota's conjecture, but in 1978, Canfield \cite{Can} discovered a counterexample to Rota's conjecture for $n$ larger than Avogadro's number.  Although the refinement order $\Pi_n$ is not Sperner, there is a closely related poset on the symmetric group $S_n$ (also called the refinement order) which Harper and Kim \cite{HK} recently proved is not only Sperner, but strong Sperner.  The refinement order on $S_n$ is anti-isomorphic to a well-known (see, e.g., \cite{Arm}) order on $S_n$ called the \emph{absolute order}; i.e., $x\leq y$ in the refinement order if and only if $y\leq x$ in the absolute order.  Hence an immediate corollary to \cite{HK} is that the absolute order on $S_n$ is strong Sperner.

\begin{figure}
\centering
\begin{tikzpicture}
\def\ra{5.5}
\def\het{.2}
\draw[fill=black] (-.8,0+\het) circle (3pt);
\draw[fill=black] (.8,0+\het) circle (3pt);
\draw[fill=black] (-1.6,1.5) circle (3pt);
\draw[fill=black] (0,1.5) circle (3pt);
\draw[fill=black] (1.6,1.5) circle (3pt);
\draw[fill=black] (0,3-\het) circle (3pt);
\node at (-1,-.4+\het) {$(1\text{ } 2\text{ } 3)$};
\node at (.7,-.4+\het) {$(1\text{ } 3\text{ } 2)$};
\node at (-2.1,1.5) {$(1\text{ } 2)$};
\node at (-.5,1.5) {$(1\text{ } 3)$};
\node at (2.1,1.5) {$(2\text{ } 3)$};
\node at (-.3,3.05-\het) {$e$};
\draw[thick] (-.8,0+\het) -- (-1.6,1.5) -- (0,3-\het) -- (0,1.5) -- (-.8,0+\het) -- (1.6,1.5) -- (0,3-\het);
\draw[thick] (.8,0+\het) -- (-1.6,1.5) -- (0,3-\het) -- (0,1.5) -- (.8,0+\het) -- (1.6,1.5) -- (0,3-\het);

\draw[fill=black] (0+\ra,0+\het) circle (3pt);
\draw[fill=black] (-1.6+\ra,1.5) circle (3pt);
\draw[fill=black] (0+\ra,1.5) circle (3pt);
\draw[fill=black] (1.6+\ra,1.5) circle (3pt);
\draw[fill=black] (0+\ra,3-\het) circle (3pt);
\node at (0.5+\ra,0+\het) {$123$};
\node at (-1+\ra,1.5) {$12|3$};
\node at (0.5+\ra,1.5) {$13|2$};
\node at (2.1+\ra,1.5) {$23|3$};
\node at (0.5+\ra,3-\het) {$1|2|3$};
\draw[thick] (0+\ra,0+\het) -- (-1.6+\ra,1.5) -- (0+\ra,3-\het) -- (0+\ra,1.5) -- (0+\ra,0+\het) -- (1.6+\ra,1.5) -- (0+\ra,3-\het);

\end{tikzpicture}
\caption{The refinement orders on $S_3$ and $\Pi_3$ respectively.}
\label{refinementorders}
\end{figure}
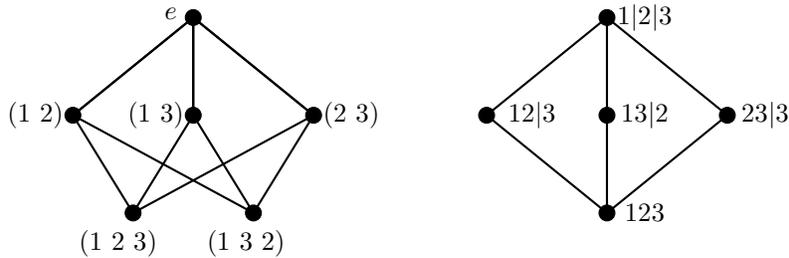

The main result in this paper is Theorem~\ref{mainthm}, which states that the absolute orders on the Coxeter groups $A_n$ and $B_n$ are strong Sperner.  The outline of the paper is as follows.  In Section~\ref{regpoly}, we recall that the Coxeter groups $A_n$ and $B_n$ can be realized as symmetry groups for regular polytopes.  This perspective is crucial to the proof of our main result.  In Section~\ref{posets}, we recall basic definitions and facts regarding posets, the Sperner property, and absolute orders.  Section~\ref{factorsection} contains Proposition~\ref{factor}, which states that any element of $A_n$ or $B_n$ can be uniquely expressed as a product of certain reflective symmetries of a fixed flag.  This proposition is a key ingredient in the proof of our main theorem, but is also an interesting result independently.  Another key ingredient is Harper's Product Theorem \cite{LH}, which is used in Section~\ref{NFP} to show that certain spanning subposets of the absolute orders have the normalized flow property, a strengthening of the strong Sperner property.  Finally, Section~\ref{mainsection} contains the proof of the main theorem.

\section{The regular $n$-simplex and $n$-cube and their symmetries}\label{regpoly} We begin by recalling some basic facts about the regular $n$-simplex and $n$-cube and their symmetries.  The regular $n$-simplex $\Dn$ is the convex hull of the standard basis $\{e_1, e_2,\ldots, e_{n+1}\}$ for $\R^{n+1}$.  The symmetry group of $\Dn$ is the finite irreducible Coxeter group $A_n$.  There is a bijective correspondence between faces of $\Dn$ and subsets of $[n+1]=\{1,2,\ldots,n+1\}$ determined by mapping the $0$-dimensional face $\{e_i\}$ in $\Dn$ to the singleton $\{i+1\}$ for each $i$.  In this correspondence, the $i$-dimensional faces (or \emph{$i$-faces}) in $\Dn$ correspond to subsets of $[n+1]$ of size $i+1$, and the \emph{facets} (i.e., the $(n-1)$-faces) correspond to subsets of size $n$.  There is an isomorphism between $A_n$ and $S_{n+1}$ such that the action of $A_n$ on $\Dn$ corresponds to the natural action of $S_{n+1}$ on $[n+1]$.  The reflections in $A_n$ correspond to the transpositions in $S_{n+1}$; in particular, the reflection swapping $e_i$ and $e_j$ corresponds to the transposition $(i\text{ } j)$.

The $n$-cube $\Qn$ is the convex hull in $\R^n$ of the Cartesian product $\{-1,1\}^n\subset\R^n$.  The dual polytope to the $n$-cube is the $n$-cross-polytope $\Xn$, which is the convex hull of $\{\pm e_1,\pm e_2,\ldots, \pm e_n\}\subset \R^n$.  The map $e_i\mapsto i$ induces a bijective correspondence between the faces of $\Xn$ and subsets of $\{\pm 1, \pm 2, \ldots, \pm n\}$ satisfying a special property.  Specifically, the $i$-faces of $\Xn$ correspond to subsets $S\subset\{\pm j\}_{j=1}^n$ of size $i$, with the property that $k\in S$ implies $-k\notin S$ for all $k\in \{\pm j\}_{j=1}^n$.  The symmetry group for each of the dual polytopes $\Qn$ and $\Xn$ is the hyperoctahedral group $B_n$.  The hyperoctahedral group is isomorphic to the \emph{signed permutation group} --- i.e., the group of permutations $w$ of the set $\{\pm j\}_{j=1}^n$ with the property that $w(-i)=-w(i)$ for all $i$ --- and there is an obvious choice of isomorphism between these two groups such that their respective actions on $\Xn$ and $\{\pm j\}_{j=1}^n$ correspond.  Following \cite[Section 2.2]{MK}, we denote the signed permutation with cycle form $(a_1\text{ } a_2\text{ } \cdots \text{ }a_k)(-a_1\text{ } -a_2\text{ } \cdots\text{ } -a_k)$ by $(\!(a_1, a_2, \ldots, a_k)\!)$, and $(a_1\text{ } a_2\text{ } \cdots\text{ } a_k\text{ } -a_1\text{ } -a_2\text{ } \cdots \text{ }-a_k)$ by $[a_1, a_2, \ldots, a_k]$.  The set of reflections in $B_n$ corresponds to the union of $\{[i]\}_{i=1}^n$ and $\{(\!(i,j)\!), (\!(i,-j)\!)\}_{1\leq i<j\leq n}$; in particular, the reflection of $\Xn$ swapping $e_i$ and $-e_i$ corresponds to $[i]$, and the reflection swapping $e_i$ and $\pm e_j$ (for $i\neq j$) corresponds to $(i\text{ } \pm j)$.

\begin{lem}\label{uniquerefl}
For any pair $(C,C^\prime)$ of distinct facets in $\Dn$ (resp. $\Qn$), there is a unique reflection in $A_n$ (resp. $B_n$) mapping $C$ to $C^\prime$.
\end{lem}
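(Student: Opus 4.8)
The plan is to convert the geometric claim into a statement about the permutation actions recalled above. For $\Dn$, I would identify each facet with the singleton subset of $[n+1]$ complementary to it (equivalently, with the unique vertex of $\Dn$ it omits), giving a bijection between the $n+1$ facets and $[n+1]$; under $A_n\cong S_{n+1}$ a symmetry $w$ carries the facet labeled $i$ to the facet labeled $w(i)$, since $w$ permutes subsets and hence permutes the omitted vertex. For $\Qn$, I would pass to the dual cross-polytope $\Xn$, using the standard bijection between the $2n$ facets of $\Qn$ and the $2n$ vertices $\pm e_1,\dots,\pm e_n$ of $\Xn$ (the facet $\{x:x_i=1\}$ corresponding to $e_i$ and $\{x:x_i=-1\}$ to $-e_i$); labeling facets by $\{\pm1,\dots,\pm n\}$ via $\pm e_i\mapsto\pm i$, the action of $B_n$ on facets is exactly its signed-permutation action on labels. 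In either case a reflection $w$ satisfies $w(C)=C'$ precisely when $w$ sends the label of $C$ to that of $C'$; since reflections are involutions and $C\neq C'$, this says that $w$ interchanges the two labels. The lemma thus reduces to finding, for each pair of distinct labels, a unique reflection swapping them.

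For $\Dn$ this is immediate: the reflections of $A_n$ are the transpositions, and the only transposition interchanging distinct $i,j\in[n+1]$ is $(i\ j)$, which settles existence and uniqueness at once.

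For $\Qn$ I would argue by cases on a pair of distinct labels $k,l\in\{\pm1,\dots,\pm n\}$. If $l=-k$, the sought reflection must preserve the absolute value $|k|$ while reversing its sign; inspecting the list of reflections, each signed transposition $(\!(\cdot,\cdot)\!)$ either fixes $k$ or sends it to a label of different absolute value, so the only candidate is $[\,|k|\,]$, which indeed works. If instead $|l|\neq|k|$, the reflection must carry the absolute value $|k|$ to $|l|$, forcing it to be one of the two signed transpositions $(\!(\,|k|,|l|\,)\!)$ or $(\!(\,|k|,-|l|\,)\!)$; reading off their action on $k$ shows that exactly one of them sends $k$ to $l$, according to whether $k$ and $l$ agree or disagree in sign. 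This yields a unique reflection in every case.

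I expect the only genuine difficulty to lie in the uniqueness bookkeeping for $\Qn$. Unlike the simplex case, a single $B_n$ reflection interchanges two pairs of facets at once --- for instance $(\!(i,j)\!)$ swaps both $\{i,j\}$ and $\{-i,-j\}$ --- so one cannot simply match the count of reflections against the count of pairs of facets. The care needed is in checking that, for a fixed pair, no reflection of one type can be confused with one of another type; I would organize this by first separating the cases $l=-k$ and $|l|\neq|k|$ (handled by reflections of disjoint types, since the $[m]$'s preserve absolute values while the $(\!(\cdot,\cdot)\!)$'s change them) and then tabulating the four sign patterns of $(k,l)$ in the second case. As a consistency check, summing the pairs swapped by each reflection gives $n+4\binom{n}{2}=\binom{2n}{2}$, exactly the number of pairs of facets of $\Qn$.
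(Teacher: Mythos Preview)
Your proposal is correct and follows essentially the same approach as the paper: both translate facets to labels (subsets of $[n+1]$ or elements of $\{\pm 1,\dots,\pm n\}$ via the dual cross-polytope) and identify the unique reflection by a short case analysis on the pair of labels. Your version is slightly more explicit about uniqueness---using that reflections are involutions to reduce to ``swaps the two labels,'' and including the counting sanity check for $B_n$---but these are embellishments of the same argument rather than a different route.
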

\begin{proof}
Let $C\neq C^\prime$ be facets in $\Dn$.  Recall that facets in $\Dn$ correspond to subsets of size $n$ in $[n+1]$, and reflective symmetries of $\Dn$ correspond to transpositions in $S_{n+1}$.  Since $C\neq C^\prime$, it follows that $C-C^\prime = \{i\}$ and $C^\prime -C = \{j\}$ for some $i\neq j$.  The unique reflection mapping $C$ to $C^\prime$ is $(i\text{ } j)$.

Now let $C\neq C^\prime$ be facets in $\Qn$.  The facets of $\Qn$ correspond to the vertices of $\Xn$, which in turn correspond to elements of $\{\pm 1, \pm 2, \ldots, \pm n\}$.  Suppose without loss of generality that $C$ corresponds to $1$.  Either $C^\prime$ corresponds to $-1$, $j$ for some $j\neq 1$, or $-j$ for some $j\neq 1$.  In any case, there is a unique reflection in $B_n$ mapping $C$ to $C^\prime$ (specifically, the reflections $[1]$, $(\!(1,j)\!)$, and $(\!(1,-j)\!)$, respectively).
\end{proof}

Define a \emph{(complete) flag} $\sF=(\sP_i)_{i=0}^n$ in an $n$-dimensional regular polytope $\sP$ to be a sequence of faces in $\sP$, ordered by containment, with $\dim(\sP_i)=i$.  The action of $A_n$ (resp. $B_n$) on $\Dn$ (resp. $\Qn$) induces a simply transitive action on the associated set of flags.  Hence if we designate some flag in $\Dn$ or $\Qn$ --- call it the \emph{standard flag} $\sFstd=(\sP_i^\text{std})_{i=0}^n$ --- then a correspondence between elements of its symmetry group and its set of flags can be defined via $w\mapsto w\cdot\sFstd$.  Note that, for all $i\in [0,n]$, the $i$-faces for the $n$-simplex (resp. the $n$-cube) are $i$-simplices (resp. $i$-cubes).

\section{Posets, the Sperner property, and the absolute orders}\label{posets} Let $P$ be a finite graded poset with rank decomposition $P=\bigsqcup_{i=0}^r P_i$.  In this paper, the term ``poset'' is always used to mean finite graded poset.  A \emph{$k$-family} in $P$ is a subset of $P$ containing no chain of size $k+1$.  The poset $P$ is defined to be \emph{$k$-Sperner} if the union of the $k$ largest rank levels $P_i$ is a $k$-family of maximal size; \emph{strong Sperner} if $P$ is $k$-Sperner for all $k\in [1,r+1]$; and \emph{rank unimodal} if $|P_0|\leq |P_1|\leq \cdots \leq |P_{j-1}|\leq |P_j| \geq |P_{j+1}|\geq\cdots\geq |P_r|$ for some $j$.  Note that the 1-Sperner property is otherwise known as the Sperner property, and a 1-family is otherwise known as an antichain.  A \emph{spanning subposet} for a poset $P$ is any subposet with the same vertex set and rank function as $P$.  
\begin{lem}\label{subposet}
Suppose that $P$ is a spanning subposet of $P^\prime$.  If $P$ is rank unimodal and strong Sperner, then so is $P^\prime$.  
\end{lem}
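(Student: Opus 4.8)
The plan is to exploit two simple facts: a spanning subposet has exactly the same rank levels (as sets), so the cardinalities of rank levels are unchanged; and enlarging a poset (adding order relations) can only shrink its $k$-families, while leaving untouched the canonical $k$-family consisting of the $k$ largest rank levels. So the argument will be essentially formal, with the one delicate point being the direction in which the inclusion of $k$-families runs.

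First I would observe that, since $P$ and $P^\prime$ have the same vertex set and the same rank function, their rank decompositions literally agree: $P_i = P^\prime_i$ as sets, and in particular $|P_i| = |P^\prime_i|$ for every $i$. Consequently rank unimodality of $P^\prime$ is immediate from rank unimodality of $P$, and the ``$k$ largest rank levels'' of $P^\prime$ are the same as those of $P$, with the same total cardinality.

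Next, fix $k \in [1, r+1]$ and aim to show $P^\prime$ is $k$-Sperner. The key step: because $P$ is a subposet of $P^\prime$, every chain of $P$ is a chain of $P^\prime$; hence any subset of $P^\prime$ that contains no $(k+1)$-element chain of $P^\prime$ a fortiori contains no $(k+1)$-element chain of $P$. In other words, every $k$-family of $P^\prime$ is a $k$-family of $P$, so the maximum size of a $k$-family of $P^\prime$ is at most the maximum size of a $k$-family of $P$. By the $k$-Sperner property of $P$, the latter equals the sum of the $k$ largest values $|P_i|$, which by the first step equals the sum of the $k$ largest values $|P^\prime_i|$.

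Finally, since $P^\prime$ is graded, any chain meets each rank level in at most one vertex, so the union of any $k$ rank levels of $P^\prime$ contains no chain of size $k+1$; in particular the union $U$ of the $k$ largest rank levels of $P^\prime$ is a $k$-family of $P^\prime$, with $|U|$ equal to the sum of the $k$ largest values $|P^\prime_i|$. By the preceding paragraph this is the largest possible cardinality of a $k$-family of $P^\prime$, so $P^\prime$ is $k$-Sperner; as $k$ was arbitrary, $P^\prime$ is strong Sperner. The only thing requiring care is that the inclusion of $k$-families goes from the larger poset $P^\prime$ to the smaller poset $P$ — which is precisely what makes the maximality bound transfer in the correct direction — together with the trivial but essential observation that the extremal $k$-family (a union of top rank levels) has the same size in both posets.
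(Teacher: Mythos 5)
Your proof is correct and takes essentially the same route as the paper: both arguments show that the union of the $k$ largest rank levels, which has the same cardinality in $P$ and $P^\prime$, is a $k$-family of maximal size in $P^\prime$, the key point being that every $k$-family of $P^\prime$ is a $k$-family of $P$. The only cosmetic difference is that you make that containment explicit and use only gradedness (a chain meets each rank level at most once), whereas the paper invokes rank unimodality to choose the top $k$ levels with consecutive ranks, which is not actually needed for their union to be a $k$-family.
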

\begin{proof}
Since $P$ is rank unimodal, its largest $k$ rank levels can be chosen so that their ranks are consecutive.  Their union is a $k$-family in both $P$ and $P^\prime$.  Since $P$ is $k$-Sperner, this union is a $k$-family in $P$ of maximal size, and therefore a $k$-family in $P^\prime$ of maximal size.
\end{proof}

We briefly recall some generalities about absolute orders; see, e.g., \cite{Arm} for details.  Let $W$ be a finite Coxeter group with set of reflections $T$.  The \emph{absolute length} $l_T$ on $W$ is the word length with respect to $T$.  The \emph{absolute order} on $W$ is defined by \[\pi\leq \mu\text{ if and only if }l_T(\mu)=l_T(\pi)+l_T(\pi^{-1}\mu)\] for all $\pi, \mu\in W$.  Equivalently, the absolute order is the partial order on $W$ generated by the covering relations $w\rightarrow tw$, where $w\in W$, $t\in T$, and $l_T(w)<l_T(tw)$.  This order is graded with rank function $l_T$.  The absolute length generating function $\mathrm{P}_W(q)=\sum_{w\in W}q^{l_T(w)}$ satifies $\mathrm{P}_W(q)=\prod_{i=1}^n (1+(d_i-1)q)$, where $(d_i)_{i=1}^n$ is the degree sequence for $W$ (and $n=\rank(W)$) \cite[p. 35]{Arm}.  It follows that $|T|=|l_T^{-1}(1)|=\sum_{i=1}^n (d_i -1)$.  Moreover, the rank sequence $\left(|l^{-1}_T(i)|\right)_{i=0}^{n}$ for any absolute order is strictly log-concave by \cite[Theorem 4.5.2]{HSW}, and thus all of the absolute orders are rank unimodal.  

\section{Factoring elements of $A_n$ and $B_n$}\label{factorsection}
For all that follows, $\sP$ denotes the regular $n$-simplex or $n$-cube, and $W$ denotes the corresponding symmetry group.  Any reflective symmetry of an $i$-face $\sP_i$ of $\sP$ uniquely extends to a reflective symmetry of $\sP$.  Define $T_{\sP_i}$ to be the embedding of the set of reflections of $\sP_i$ into $W$.  

\begin{lem}\label{difference}
Let $\sP$ be the $n$-simplex or $n$-cube, and let $W$ be the corresponding group of symmetries with degree sequence $(d_i)_{i=1}^n$.  Fix a standard flag $(\sP_i^{\text{std}})_{i=0}^n$ in $\sP$, and set $T_i=T_{{\sP_i}^{\text{std}}}$.  It follows that,  for all $i\in [1,n]$, $|T_i-T_{i-1}|=d_i-1$.
\end{lem}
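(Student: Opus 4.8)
The plan is to compute $|T_i - T_{i-1}|$ directly by identifying which reflections of the big polytope $\sP$ restrict to reflections of the sub-face $\sP_i^{\text{std}}$ but not of $\sP_{i-1}^{\text{std}}$. Recall that $T_i = T_{\sP_i^{\text{std}}}$ consists of those reflective symmetries of $\sP$ that fix $\sP_i^{\text{std}}$ setwise and act on it as a reflection; since $\sP_{i-1}^{\text{std}}\subset\sP_i^{\text{std}}$, every element of $T_{i-1}$ lies in $T_i$, so $T_{i-1}\subseteq T_i$ and $|T_i-T_{i-1}|=|T_i|-|T_{i-1}|$. By the observation at the end of Section~\ref{regpoly}, the $i$-face $\sP_i^{\text{std}}$ is itself a regular $i$-simplex (resp. $i$-cube), and its group of symmetries is the corresponding Coxeter group of rank $i$, namely $A_i$ (resp. $B_i$). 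The reflections of that smaller Coxeter group are exactly the elements of $T_i$.

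Now apply the formula recalled in Section~\ref{posets}: for a finite Coxeter group $W$ of rank $n$ with degree sequence $(d_j)_{j=1}^n$, one has $|T| = \sum_{j=1}^n (d_j-1)$. For $A_i$ the degrees are $2,3,\ldots,i+1$, so $|T_i^{A}| = 1+2+\cdots+i = \binom{i+1}{2}$; for $B_i$ the degrees are $2,4,6,\ldots,2i$, so $|T_i^{B}| = 1+3+5+\cdots+(2i-1) = i^2$. In the simplex case, $|T_i|-|T_{i-1}| = \binom{i+1}{2}-\binom{i}{2} = i$, which equals $d_i - 1$ since $d_i = i+1$. In the cube case, $|T_i|-|T_{i-1}| = i^2 - (i-1)^2 = 2i-1$, which equals $d_i-1$ since $d_i = 2i$. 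In both cases we get $|T_i-T_{i-1}| = d_i - 1$, as claimed. Alternatively, and without splitting into cases, one can argue uniformly: $|T_i| = \sum_{j=1}^i (d_j - 1)$ where $(d_j)_{j=1}^i$ is the degree sequence of the rank-$i$ parabolic factor, which is an initial segment of the degree sequence $(d_j)_{j=1}^n$ of $W$ (true for both $A_n$ and $B_n$), so the telescoping difference $|T_i|-|T_{i-1}|$ is exactly the single term $d_i - 1$.

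The only point requiring care — the main (minor) obstacle — is justifying that the symmetry group of the sub-face $\sP_i^{\text{std}}$, embedded in $W$ via the extension map of Section~\ref{factorsection}, has degree sequence equal to the initial segment $(d_j)_{j=1}^i$ of the degree sequence of $W$. For $\sP = \Dn$ this is clear since a face of a regular simplex is a regular simplex of the appropriate dimension and $A_i$ has degrees $2,3,\dots,i+1$; for $\sP = \Qn$ a facet-chain face of dimension $i$ is a regular $i$-cube with symmetry group $B_i$ of degrees $2,4,\dots,2i$, and these are indeed the first $i$ terms of $2,4,\dots,2n$. Once this identification of degree sequences is in hand, the result is immediate from the Coxeter-group identity $|T| = \sum (d_j-1)$ already quoted in Section~\ref{posets}.
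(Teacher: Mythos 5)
Your proof is correct and follows essentially the same route as the paper: both identify the symmetry group of the standard $i$-face with $A_i$ (resp.\ $B_i$), observe that its degree sequence is the initial segment $(d_j)_{j=1}^i$ of the degree sequence of $W$, and compute $|T_i - T_{i-1}| = |T_i|-|T_{i-1}| = \sum_{j=1}^i (d_j-1) - \sum_{j=1}^{i-1}(d_j-1) = d_i - 1$ using the identity $|T|=\sum_j(d_j-1)$ recalled in Section~\ref{posets}. Your explicit remark that $T_{i-1}\subseteq T_i$ (justifying replacing the set difference by a difference of cardinalities) is a small, welcome clarification of a step the paper leaves implicit.
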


\begin{proof}
The $n$-simplex (resp. the $n$-cube) has the property that, for each $i$, each of its $i$-faces is an $i$-simplex (resp. $i$-cube).  Hence the symmetry group for any of its $i$-faces is $A_i$ (resp. $B_i$).  If the degree sequence for the $n$-simplex (resp. $n$-cube) is $(d_j)_{j=1}^n$, then the degree sequence associated to an $i$-face is $(d_j)_{j=1}^i$.  It follows that $|T_i-T_{i-1}|=|T_i|-|T_{i-1}|=\sum_{j=1}^i (d_j-1)-\sum_{j=1}^{i-1} (d_j-1)=d_i-1$.
\end{proof}

\noindent It is easily verified that the relation between a regular polytope and the degree sequence of its symmetry group described in Lemma~\ref{difference} is satisfied by \emph{precisely} the $n$-simplices, $n$-cubes, and $m$-gons (and none of the other regular polytopes).  For ease of reference, we note here that the degree sequence $(d_i)_{i=1}^n$ for $A_n$ is defined by $d_i=i+1$, and for $B_n$ is defined by $d_i=2i$.  
\begin{prop}\label{factor}
Let $\sP$ be the $n$-simplex or $n$-cube, and let $W$ be the associated symmetry group.  Fix a standard flag $\sFstd=(\sP_i^{\text{std}})_{i=0}^n$ in $\sP$, and set $T_i=T_{\sP_i^{\text{std}}}$.  
\begin{enumerate}
\item Any element $w\in W$ has a unique factorization of the form \[w=r_n r_{n-1} \cdots r_2r_1\] with $r_i\in (T_i-T_{i-1}) \sqcup \{e\}$ for each $i$, where $e$ is the identity in $W$.
\item Given such a factorization, the length can be computed via \[ l_T\left(\prod_{i=0}^{n-1}r_{n-i}\right)=|\{i: r_i\neq e\}|.\]
\item Finally, $\prod_{i=0}^{n-1}r_{n-i}$ covers $\prod_{i=0}^{n-1}r^\prime_{n-i}$ if there exists $k$ such that $r_k\neq r^\prime_k=e$ and $r_j=r_j^\prime$ for all $j\neq k$.
\end{enumerate}
\end{prop}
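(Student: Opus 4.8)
The plan is to handle the three parts in turn. For part~(1) I would induct on $n$ (the case $n\le 1$ being immediate), peeling a reflection off the top of the flag. Given $w\in W$, the facets $\sP_{n-1}^{\text{std}}$ and $w\sP_{n-1}^{\text{std}}$ of $\sP$ either coincide, in which case I set $r_n:=e$, or else Lemma~\ref{uniquerefl} supplies a unique reflection $r_n$ with $r_n\sP_{n-1}^{\text{std}}=w\sP_{n-1}^{\text{std}}$; in the latter case $r_n$ does not stabilize $\sP_{n-1}^{\text{std}}$, so $r_n\notin T_{n-1}$ (every element of $T_{n-1}$ restricts to a symmetry of $\sP_{n-1}^{\text{std}}$ and hence stabilizes it), while $r_n\in T=T_n$, giving $r_n\in T_n-T_{n-1}$. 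Either way $r_n^{-1}w$ stabilizes $\sP_{n-1}^{\text{std}}$ and so lies in its symmetry group, which --- since every facet of the $n$-simplex (resp.\ $n$-cube) is an $(n-1)$-simplex (resp.\ $(n-1)$-cube) --- is a copy of $A_{n-1}$ (resp.\ $B_{n-1}$) inside $W$; moreover $(\sP_i^{\text{std}})_{i=0}^{n-1}$ is a complete flag in $\sP_{n-1}^{\text{std}}$, and by uniqueness of the extension of a reflective symmetry (first to $\sP_{n-1}^{\text{std}}$, then to $\sP$) the sets $T_i-T_{i-1}$ for $i\le n-1$ are the same whether computed in this subgroup or in $W$. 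The inductive hypothesis applied to $r_n^{-1}w$ then yields $r_{n-1},\dots,r_1$ with $w=r_nr_{n-1}\cdots r_1$ of the required form, establishing existence. For uniqueness I would simply count: by Lemma~\ref{difference} the set of admissible tuples has cardinality $\prod_{i=1}^n\bigl((d_i-1)+1\bigr)=\prod_{i=1}^n d_i=\mathrm{P}_W(1)=|W|$, so the surjection $(r_n,\dots,r_1)\mapsto r_n\cdots r_1$ between finite sets of equal size is a bijection.

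For part~(2), write $k=|\{i:r_i\neq e\}|$. Deleting the trivial factors exhibits $w=r_n\cdots r_1$ as a product of $k$ reflections, so $l_T(w)\le k$. For the reverse inequality I would work in the defining representations --- $A_n$ permuting the coordinates of $\R^{n+1}$, with reflection representation the hyperplane $\{\sum x_j=0\}\cong\R^n$, and $B_n$ acting on $\R^n$ as signed permutations --- choosing the standard flag conveniently. First, whenever $u=t_1\cdots t_m$ with the $t_j$ reflections, $\mathrm{Fix}(u)\supseteq\bigcap_j\mathrm{Fix}(t_j)$, so $\operatorname{codim}\mathrm{Fix}(u)\le m$; in particular $\operatorname{codim}\mathrm{Fix}(w)\le l_T(w)$. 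Second, I claim $\operatorname{codim}\mathrm{Fix}(w)=k$ in the $n$-dimensional reflection representation, which together with the preceding inequality and $l_T(w)\le k$ forces $l_T(w)=k$. This I would prove by induction on $i$, showing that the partial product $u_i:=r_ir_{i-1}\cdots r_1$ --- which lies in the symmetry group of the $i$-face $\sP_i^{\text{std}}$, itself an $i$-simplex or $i$-cube --- satisfies $\dim\mathrm{Fix}(u_i)=i-|\{j\le i:r_j\neq e\}|$ (for $A_i=S_{i+1}$ it is convenient to count cycles in the permutation representation $\R^{i+1}$ and subtract one). The inductive step is a direct case analysis on $r_i$: if $r_i=e$ the fixed space merely acquires the new coordinate direction, and if $r_i$ is $[i]$, a signed transposition $(\!(a,\pm i)\!)$, or (in the simplex case) a transposition of the two relevant vertices, an explicit linear computation gives $\dim\mathrm{Fix}(r_iu_{i-1})=\dim\mathrm{Fix}(u_{i-1})-1$.

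For part~(3), suppose $r_j=r_j'$ for all $j\neq k$, $r_k'=e$, and $r_k\neq e$; put $w=r_n\cdots r_1$, $w'=r_n'\cdots r_1'$, $A=r_n\cdots r_{k+1}$, $B=r_{k-1}\cdots r_1$, so that $w=Ar_kB$ and $w'=AB$. Then $w(w')^{-1}=Ar_kA^{-1}=:t$ is a conjugate of the reflection $r_k$, hence $t\in T$ and $w=tw'$. By part~(2), $l_T(w)=|\{j:r_j\neq e\}|=l_T(w')+1$, so $l_T(w')<l_T(w)$, and therefore $w'\rightarrow tw'=w$ is one of the generating covering relations of the absolute order; that is, $w$ covers $w'$.

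I expect the main obstacle to be the lower bound in part~(2). The recursion and counting in part~(1) and the cancellation in part~(3) are essentially formal, and the two inequalities relating reflection length and the codimension of the fixed space are one-liners, but pinning down $\dim\mathrm{Fix}(w)$ exactly requires descending into the cycle-and-sign combinatorics of $A_n$ and $B_n$ and running the case analysis with care --- in particular, keeping the permutation and reflection representations of $A_n$ straight throughout.
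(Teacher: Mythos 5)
Your proposal is correct, but it takes a partly different route than the paper, and the comparison is worth recording. In part (1) your existence step --- peeling off $r_n$ via Lemma~\ref{uniquerefl} and inducting inside the symmetry group of the standard facet --- is exactly the paper's recursion (phrased there as moving the flag $w\cdot\sFstd$ back to a flag in $\sP_{n-1}^{\text{std}}$); the difference is uniqueness, which you get by counting: admissible tuples number $\prod_{i=1}^n d_i=\mathrm{P}_W(1)=|W|$ by Lemma~\ref{difference}, so the surjection $(r_n,\dots,r_1)\mapsto r_n\cdots r_1$ is a bijection. The paper instead extracts uniqueness from the recursion itself, since Lemma~\ref{uniquerefl} pins down $r_n$ and induction handles the rest; your version is clean but leans on the product formula for $\mathrm{P}_W(q)$ quoted in Section~\ref{posets}, while the paper's is more self-contained. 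The larger divergence is part (2): the paper argues directly that each non-identity factor raises absolute length by exactly one --- the partial product $r_{i-1}\cdots r_1$ lies in the stabilizer of $\sP_{i-1}^{\text{std}}$, i.e.\ is a (signed) permutation of the first $i-1$ letters, and $r_i\in T_i-T_{i-1}$ moves the new letter, so $l_T(r_iw)>l_T(w)$ and hence $l_T(r_iw)=l_T(w)+1$ --- whereas you sandwich $l_T(w)$ between the trivial upper bound (the number of non-identity factors) and the lower bound $\operatorname{codim}\mathrm{Fix}(w)$, then compute the fixed-space codimension exactly by induction. This Carter-style argument is more robust (it works for any finite reflection group once $\dim\mathrm{Fix}$ is controlled), but the exact computation conceals the same cycle/sign bookkeeping the paper uses implicitly: for $B_n$ one must check that merging the fixed letter $i$ into an existing cycle via either $(\!(j,i)\!)$ or $(\!(-j,i)\!)$ keeps the sign type of that cycle, so the fixed dimension drops by exactly one in every case --- this is true, but it is the one spot where your ``explicit linear computation'' should actually be written out. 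Part (3) is the paper's argument verbatim, up to conjugating $r_k$ on the other side.
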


\begin{proof}
We begin by proving (1).  The claim is clearly true for $n=1$.  Now let $n>1$ be arbitrary, and suppose the claim is true for $n-1$.  Let $w\in W$, with corresponding flag $\sF=(\sP_i)_{i=0}^{n}$.  If $(\sP_i)_{i=0}^{n-1}$ is a flag in the ``standard facet'' $\sP_{n-1}^{\text{std}}$, then the claim follows by the inductive hypothesis.  Suppose instead that $(\sP_i)_{i=0}^{n-1}$ is a flag in some other facet $C$.  Lemma~\ref{uniquerefl} implies that there is a unique reflection $r_n\in (T_n-T_{n-1})-\{e\}$ mapping $C$ to $\sP_{n-1}^{\text{std}}$.  By the inductive hypothesis, it follows that $r_n\cdot \sF=(r_{n-1}\cdots r_2r_1)\cdot\sFstd$ with $r_i\in (T_i-T_{i-1})\sqcup \{e\}$ for all $i\in [1,n-1]$.  Therefore $w\cdot\sFstd=\sF=r_n(r_{n-1}\cdots r_2r_1)\cdot\sFstd$, and the claim follows.

To prove (2), we first let $\sP$ be the $n$-simplex and let $W$ be its symmetry group.  Assume without loss of generality that $\sFstd=\left([i+1]\right)_{i=0}^n$.  Then $T_i-T_{i-1}$ consists of all transpositions $(j\text{  }(i+1))$ with $j\in [1,i]$.  If $w$ is a product of elements in $T_{i-1}\sqcup \{e\}$, then $w$ is a permutation of $[i]$.  Hence $l_T(r_iw)>l_T(w)$, which implies that $l_T(r_iw)=l_T(w)+1$.  The claim follows from a straight-forward induction on $n$.  Now let $\sP$ be the $n$-cube and $W$ its symmetry group.  Assume without loss of generality $\sFstd=(\sP_i^{\text{std}})_{i=0}^n$ is chosen so that the symmetries of $\sP^{\text{std}}_i$ correspond to symmetries of $\{\pm 1, \pm 2, \ldots, \pm i\}$.  Then $T_i$ consists of all reflections of the form $[j]$, $(\!(j, k)\!)$, and $(\!(-j, k)\!)$ with $j, k\in [1, i]$ and $j\neq k$, and $T_i-T_{i-1}$ consists of all reflections of the form $[i]$, $(\!(j, i)\!)$, and $(\!(-j, i)\!)$ with $j\in [1,i-1]$.  Similar to the case above, the product $r_iw$ of $r_i$ in $T_i-T_{i-1}$ with a product $w$ of reflections in $T_{i-1}$ has $l_T(r_iw)>l_T(w)$.  Hence $l_T(r_iw)=l_T(w)+1$, and the claim follows by induction.

Finally, to prove (3), let $w$ and $w^\prime$ be elements of $W$ with the property that their expansions $w=\prod_{i=0}^{n-1} r_i$ and $w^\prime=\prod_{i=0}^{n-1} r^\prime_i$ satisfy $r_k\neq r^\prime_k=e$ for some $k$ and $r_j=r_j^\prime$ for all $j\neq k$.  By Proposition~\ref{factor}.2, it follows that $l_T(w^\prime)+1=l_T(w)$.  Set $\sigma=\prod_{i=0}^{k-1} r_i$ and $\tau=\prod_{i=k+1}^{n-1} r_i$, so that $w=\sigma r_k \tau$ and $w^\prime = \sigma\tau$.  Then $l_T((w^\prime)^{-1}w)=l_T(\tau^{-1}\sigma^{-1}\sigma r_k\tau)=l_T(\tau^{-1}r_k\tau)=1$.  Since $l_T(w^\prime)+l_T((w^\prime)^{-1}w)=l_T(w)$, it follows that $w$ covers $w^\prime$.\end{proof}

\section{The Sperner property for products}\label{NFP} It is not the case that a product of Sperner (or even strong Sperner) posets is necessarily Sperner.  However, if two posets satisfy a strengthening of the strong Sperner property called the \emph{normalized flow property} (abbreviated NFP), as well as an additional easily verified hypothesis, then their product has the NFP (and hence is strong Sperner) by Harper's Product Theorem \cite{LH}.  As the NFP is only referenced in this paper for the purpose of proving Lemma~\ref{prodstar}, we do not recall the fundamental definitions and theory regarding flows on posets here.  The curious reader may refer to, e.g., \cite{LH}.

Define a \emph{$k$-claw} $C_k=\bigsqcup_{l=0}^1(C_k)_l$ to be the graded poset with $|(C_k)_0|=1$, $|C_k|=k$, and whose underlying graph is complete bipartite.  

\begin{lem}\label{prodstar}
Let $\{k_i\}_{i=1}^n\subset \Z_+$.  The product poset $\prod_{i=1}^nC_{k_i}$ is strong Sperner.
\end{lem}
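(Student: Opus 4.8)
The plan is to prove that each $k$-claw $C_k$ has the \emph{normalized flow property} (NFP), and then to build up the product $\prod_{i=1}^n C_{k_i}$ one factor at a time using Harper's Product Theorem \cite{LH}, which guarantees that the NFP --- together with the easily verified auxiliary hypothesis of that theorem --- is preserved under taking products. Since the NFP implies the strong Sperner property, this gives the lemma.

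For the base case I would check directly that $C_k$ has the NFP. The poset $C_1$ is a single point and there is nothing to prove. For $k\geq 2$, the $k$-claw has exactly two rank levels, with $|(C_k)_0|=1$ and $|(C_k)_1|=k-1$, and its Hasse diagram is the complete bipartite graph $K_{1,\,k-1}$ between them. For a height-one poset of this shape, exhibiting a normalized flow amounts to a one-line computation: one sends a unit of flow out of the single bottom vertex, split equally among the $k-1$ edges. Moreover $C_k$ has rank generating function $1+(k-1)q$ and hence a log-concave (in particular unimodal) rank sequence, so $C_k$ satisfies the easily verified auxiliary hypothesis of Harper's Product Theorem.

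For the inductive step, suppose $\prod_{i=1}^{n-1}C_{k_i}$ has the NFP. Its rank generating function is the product $\prod_{i=1}^{n-1}\bigl(1+(k_i-1)q\bigr)$, a real-rooted polynomial with nonnegative coefficients, so its rank sequence is again log-concave and the auxiliary hypothesis is met. Writing $\prod_{i=1}^n C_{k_i}=\bigl(\prod_{i=1}^{n-1}C_{k_i}\bigr)\times C_{k_n}$ and applying Harper's Product Theorem to the two factors --- each of which has the NFP and meets the auxiliary hypothesis --- shows that the product again has the NFP. Hence $\prod_{i=1}^n C_{k_i}$ has the NFP, and in particular it is strong Sperner.

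The substantive content is entirely packaged into Harper's Product Theorem, so the only thing requiring genuine care is confirming that the precise form of its auxiliary hypothesis is met at each stage of the induction: for the individual claws this is immediate from their two-level structure, and for the intermediate products it follows from the real-rootedness (hence log-concavity, hence unimodality) of the relevant rank generating functions. I expect this bookkeeping to be the main --- and essentially only --- obstacle, after which the argument is a routine induction on $n$.
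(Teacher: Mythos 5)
Your proposal is correct and follows essentially the same route as the paper: exhibit a normalized flow on each claw (uniform weight $1/(k-1)$ on the edges), verify the log-concavity hypothesis, and induct using Harper's Product Theorem. The only cosmetic difference is that you certify log-concavity of the intermediate products via real-rootedness of the rank generating function, whereas the paper simply takes it from the Product Theorem's own conclusion; both are fine.
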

\begin{proof} 
Suppose that the weight $\nu (x)$ of any vertex $x$ in a claw equals 1.  Each claw $C_{k_i}$ has a normalized flow (e.g., the flow $f$ defined by setting $f(e)=1/(k_i-1)$ for all edges $e$) and thus has the NFP.  Moreover, the sequence $\left(\nu((C_{k_i})_l)\right)_{l=0}^1$  is clearly log-concave (or 2-positive using the terminology of \cite{LH}) for each $i$.  By \cite[Product Theorem]{LH}, it follows that any product of claws $C_{k_i}\times C_{k_j}$ has NFP and satisfies the property that the sequence $\left(\nu((C_{k_i}\times C_{k_j})_l)\right)_{l=0}^2$ is log-concave.  The claim follows by induction.
\end{proof}

\section{Main result}\label{mainsection}
\begin{thm}\label{mainthm}
The absolute orders on $A_n$ and $B_n$ are strong Sperner.
\end{thm}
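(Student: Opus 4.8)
The plan is to assemble the main theorem from the machinery built up in Sections~\ref{factorsection}--\ref{NFP}. The key observation is that Proposition~\ref{factor} furnishes, for $W$ equal to $A_n$ or $B_n$, an explicit bijection $w \mapsto (r_1,\ldots,r_n)$ from $W$ onto the product set $\prod_{i=1}^n \bigl((T_i-T_{i-1})\sqcup\{e\}\bigr)$, and that by Lemma~\ref{difference} the $i$-th factor has cardinality $(d_i-1)+1 = d_i$. Thus, setting $k_i = d_i$, we obtain a bijection between the vertex set of the absolute order on $W$ and the vertex set of the product of claws $\prod_{i=1}^n C_{k_i}$, where I identify the ``center'' vertex $(C_{k_i})_0$ with $e$ and the $k_i-1$ ``leaves'' with the elements of $T_i-T_{i-1}$.

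First I would verify that this bijection respects the grading: by Proposition~\ref{factor}.2 the rank $l_T(w)$ equals the number of indices $i$ with $r_i\neq e$, which is exactly the rank of the corresponding vertex in $\prod_{i=1}^n C_{k_i}$ (each factor contributes $0$ if we are at the center and $1$ if we are at a leaf). Hence the two posets have identical rank sequences, and in particular the absolute order inherits nothing new here beyond what Section~\ref{posets} already records. Next I would observe that the covering relations of $\prod_{i=1}^n C_{k_i}$ pull back, under this bijection, to genuine covering relations of the absolute order: this is precisely the content of Proposition~\ref{factor}.3, which says that if $(r_i)$ and $(r'_i)$ differ in exactly one coordinate $k$ with $r'_k = e$, then $w = \prod r_{n-i}$ covers $w' = \prod r'_{n-i}$. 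Therefore the image of $\prod_{i=1}^n C_{k_i}$ under this bijection is a \emph{spanning subposet} of the absolute order on $W$ in the sense of Section~\ref{posets} (same vertices, same rank function, and every edge of the claw-product is an edge of the absolute order).

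Now I invoke the two remaining ingredients. By Lemma~\ref{prodstar}, the product $\prod_{i=1}^n C_{k_i}$ is strong Sperner; and it is rank unimodal, since its rank-generating function is $\prod_{i=1}^n (1+(k_i-1)q) = \prod_{i=1}^n(1+(d_i-1)q) = \mathrm{P}_W(q)$, whose coefficients are log-concave (indeed this is the same polynomial appearing in Section~\ref{posets}). Applying Lemma~\ref{subposet} with $P = \prod_{i=1}^n C_{k_i}$ (transported to $W$) and $P' = $ the absolute order on $W$, we conclude that the absolute order on $W$ is rank unimodal and strong Sperner. Taking $W = A_n$ (with $d_i = i+1$, so $k_i = i+1$) and $W = B_n$ (with $d_i = 2i$, so $k_i = 2i$) gives the theorem.

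I do not anticipate a serious obstacle: all the hard work has been isolated into Proposition~\ref{factor} (the combinatorial factorization, whose proof uses the flag action and Lemma~\ref{uniquerefl}), into Harper's Product Theorem as packaged in Lemma~\ref{prodstar} (the analytic heart, handling the failure of Spernerity to be preserved under products), and into the log-concavity input from \cite{HSW} already cited in Section~\ref{posets}. The only point demanding a little care is checking that the product-of-claws structure is transported to an \emph{honest} spanning subposet — i.e., that one must verify the covering relations match up (Proposition~\ref{factor}.3 gives containment of edge sets in one direction, which is all Lemma~\ref{subposet} needs) rather than claiming a poset isomorphism, which would be false in general.
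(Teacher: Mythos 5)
Your proposal is correct and follows essentially the same route as the paper: identify the vertices of the absolute order with the product of claws $\prod_i C_{d_i}$ via Proposition~\ref{factor}(1) and Lemma~\ref{difference}, use Proposition~\ref{factor}(2)--(3) to see this product is a spanning subposet, and then combine Lemma~\ref{prodstar} with Lemma~\ref{subposet}. Your explicit check of rank unimodality via the rank-generating function $\prod_i(1+(d_i-1)q)$ is in fact slightly more careful than the paper's own write-up, which leaves that hypothesis of Lemma~\ref{subposet} implicit.
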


\begin{proof}
Let $\sP$ be the $n$-simplex or $n$-cube, and let $W$ be the associated symmetry group.  Fix a standard flag $\sFstd=(\sP_i^{\text{std}})_{i=0}^n$ in $\sP$, and set $T_i=T_{\sP_i^{\text{std}}}$.  Let $(d_i)_{i=1}^n$ be the degree sequence for $W$.  Consider the product poset \[\prod_{i=0}^{n-1} C_{d_{n-i}} = C_{d_n}\times \cdots \times C_{d_2}\times C_{d_1}\] of claws $C_{d_i}$.  For each $i$, define a bijective correspondence between the vertices of the claw $C_{d_i}$ and the elements of $(T_i-T_{i-1})\sqcup \{e\}$ by mapping the $d_i-1$ vertices in $(C_{d_i})_1$ bijectively onto $T_i-T_{i-1}$ (such a bijection exists by Lemma~\ref{difference}) and the rank $0$ vertex in $C_{d_i}$ to $e$.  These bijective correspondences between claws and sets of reflections induce a bijective correspondence $\phi(r_n, \ldots, r_2,r_1)=r_n\cdots r_2r_1$ between the vertices of the product poset $\prod_{i=0}^{n-1} C_{d_{n-i}}$ and the vertices of the absolute order $W$ by Proposition~\ref{factor}(1).

We claim that $\prod_{i=0}^{n-1} C_{d_{n-i}}$ can be viewed as a spanning subposet of $W$ via the above bijection between of the vertex sets.  It suffices to prove that if $y$ covers $x$ in $\prod_{i=0}^{n-1} C_{d_{n-i}}$, then $\phi (y)$ covers $\phi (x)$ in $W$.  Suppose that $(r_n,\ldots,r_2,r_1)$ covers $(r^\prime_n,\ldots,r^\prime_2,r^\prime_1)$ in the product of claws.  Then there exists $k$ for which $r_k\neq r_k^\prime=e$ and $r_j=r_j^\prime$ for all $j\neq k$.  By Proposition~\ref{factor}(3), the claim immediately follows.  By Lemma~\ref{prodstar}, $\prod_{i=0}^{n-1} C_{d_{n-i}}$ is strong Sperner.  Since $\prod_{i=0}^{n-1} C_{d_{n-i}}$ is a spanning subposet of $W$, it follows by Lemma~\ref{subposet} that $W$ is strong Sperner.
\end{proof}

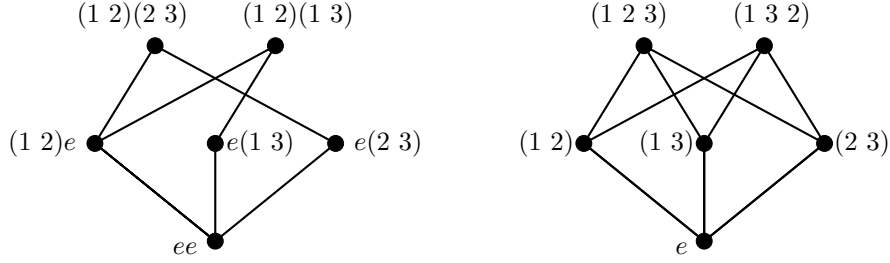
\begin{figure}
\centering
\begin{tikzpicture}
\def\ra{6.5}
\def\het{.2}
\draw[fill=black] (-.8,3-\het) circle (3pt);
\draw[fill=black] (.8,3-\het) circle (3pt);
\draw[fill=black] (-1.6,1.5) circle (3pt);
\draw[fill=black] (0,1.5) circle (3pt);
\draw[fill=black] (1.6,1.5) circle (3pt);
\draw[fill=black] (0,0+\het) circle (3pt);
\node at (-1.1,3.4-\het) {$(1\text{ } 2)(2\text{ } 3)$};
\node at (1.1,3.4-\het) {$(1\text{ }2)(1\text{ } 3)$};
\node at (-2.3,1.5) {$(1\text{ } 2)e$};
\node at (.6,1.5) {$e(1\text{ } 3)$};
\node at (2.3,1.5) {$e(2\text{ } 3)$};
\node at (-.4,-.1+\het) {$ee$};
\draw[thick] (-.8,3-\het) -- (-1.6,1.5) -- (0,0+\het) -- (0,1.5) -- (.8,3-\het) -- (-1.6,1.5) -- (0,0+\het);
\draw[thick] (-.8,3-\het) -- (1.6,1.5) -- (0,0+\het);

\draw[fill=black] (-.8+\ra,3-\het) circle (3pt);
\draw[fill=black] (.8+\ra,3-\het) circle (3pt);
\draw[fill=black] (-1.6+\ra,1.5) circle (3pt);
\draw[fill=black] (0+\ra,1.5) circle (3pt);
\draw[fill=black] (1.6+\ra,1.5) circle (3pt);
\draw[fill=black] (0+\ra,0+\het) circle (3pt);
\node at (-1+\ra,3.4-\het) {$(1\text{ } 2\text{ } 3)$};
\node at (.9+\ra,3.4-\het) {$(1\text{ } 3\text{ } 2)$};
\node at (-2.1+\ra,1.5) {$(1\text{ } 2)$};
\node at (-.5+\ra,1.5) {$(1\text{ } 3)$};
\node at (2.1+\ra,1.5) {$(2\text{ } 3)$};
\node at (-.3+\ra,-.1+\het) {$e$};
\draw[thick] (-.8+\ra,3-\het) -- (-1.6+\ra,1.5) -- (0+\ra,0+\het) -- (0+\ra,1.5) -- (-.8+\ra,3-\het) -- (1.6+\ra,1.5) -- (0+\ra,0+\het);
\draw[thick] (.8+\ra,3-\het) -- (-1.6+\ra,1.5) -- (0+\ra,0+\het) -- (0+\ra,1.5) -- (.8+\ra,3-\het) -- (1.6+\ra,1.5) -- (0+\ra,0+\het);

\end{tikzpicture}
\caption{The product of claws $C_{d_1}\times C_{d_2}$ can be viewed as a spanning subposet of the absolute order on $A_3$.}
\label{productandabsolute}
\end{figure}

\begin{rmk}
The remaining finite irreducible Coxeter groups (see, e.g., \cite[p. 2]{Arm}) are the other regular polytope symmetry groups $I_2(m)$ (for $m\ge 5$), $H_3$, $H_4$, and $F_4$, and the Weyl groups $D_n$ (for $n\ge 4$), $E_6$, $E_7$, and $E_8$.  It is straight-forward to verify that Lemma~\ref{uniquerefl},  Lemma~\ref{difference}, Proposition~\ref{factor}, and Theorem~\ref{mainthm} extend to the dihedral groups $I_2(m)$ for all $m$.  It follows that the absolute orders on the dihedral groups are strong Sperner.
\end{rmk}


\begin{thebibliography}{99}

\bibitem{Arm} D. Armstrong, {\it Generalized noncrossing partitions and combinatorics of Coxeter groups,} arXiv:math/0611106, Oct. 2007.

\bibitem{Can} E. R. Canfield, {\it On a problem of Rota,} Bull. Amer. Math. Soc., {\bf 84} (1978), 164.


\bibitem{MK} M. Kallipoliti, {\it The absolute order on the hyperoctahedral group,} J. Algebraic Combin., {\bf 34} (2011), 183--211.

\bibitem{LH} L. H. Harper, {\it The morphology of partially ordered sets,} J. Combin. Theory, {\bf 17} (1974), 44--58.


\bibitem{HK} L. H. Harper and G. B. Kim, {\it Is the Symmetric Group Sperner?,} arXiv:1901.00197, Jan. 2019.

\bibitem{Ro} G. C. Rota, {\it Research problem: A generalization of Sperner's theorem,} J. Comb. Th., {\bf 2} (1967), 104.

\bibitem{Sp} E. Sperner, {\it Ein Satz über Untermengen einer endlichen Menge,} Math. Z., {\bf 27} (1928), 544--548.


\bibitem{HSW} H. S. Wilf, {\it Generatingfunctionology,} 2d ed., Academic Press, 1994.
\end{thebibliography}
\end{document}